\newcommand{\C}{\mathbb{C}}
\newcommand{\G}{\Gamma}
\newcommand{\bd}{\partial}
\newcommand{\e}{\varepsilon}
\newcommand{\la}{\langle}
\newcommand{\ra}{\rangle}
\newcommand{\cleq}{\preccurlyeq}
\newtheorem*{theorem}{Theorem}
\newtheorem*{corollary}{Corollary}
\newtheorem{thm}{Theorem}
\newtheorem{lem}[thm]{Lemma}
\theoremstyle{definition}
\newtheorem*{ack}{Acknowledgments}
\newtheorem*{rem}{Remark}
\begin{document}
\title{On operator norms for hyperbolic groups} 
\author{Bogdan Nica}
\date{\today}
\subjclass[2010]{20F67 (primary), 22D10, 22D20 (secondary)}
\keywords{Hyperbolic groups, Haagerup inequality, boundary representations}
\address{Mathematisches Institut, Georg-August Universit\"at G\"ottingen}
\email{bogdan.nica@gmail.com}
\dedicatory{To the memory of Uffe Haagerup}

\begin{abstract} We estimate the operator norm of radial non-negative functions on hyperbolic groups. As a consequence, we show that several forms of Haagerup's inequality are optimal.
\end{abstract}
\maketitle

\section{Introduction}
Let $\G$ be a non-elementary hyperbolic group. As with any group, it is a difficult problem to compute, or even estimate the operator norm $\|\lambda(a)\|$, where $\lambda$ is the left-regular representation on $\ell^2\G$ and $a$ is an element in the group algebra $\C\G$. Haagerup's inequality gives an upper bound for the operator norm, and it is stated and used in either one of the following forms:
\begin{itemize}
\item[(H$_\circ$)] $\|\lambda(a)\|\cleq (n+1)\|a\|_2$ whenever $a\in \C\G$ is supported on the sphere of radius $n$; 
\item[(H$_\bullet$)] $\|\lambda(a)\|\cleq (n+1)^{3/2}\|a\|_2$ whenever $a\in \C\G$ is supported on the ball of radius $n$;
\item[(H$_s$)] $\|\lambda(a)\|\cleq \|a\|_{2,s}$ for $s> 3/2$, where $\|a\|_{2,s}=\big(\sum (|\gamma|+1)^{2s}|a(\gamma)|^2\big)^{1/2}$ is the $s$-weighted $\ell^2$-norm. 
\end{itemize}
The upper-bounding quantity requires the choice of a word-length $|\cdot|$, given by some finite and symmetric generating set for $\G$. The notation $\cleq$ means inequality up to positive multiplicative constants that only depend on $\G$ and the choice of generating set, and $\asymp$ is the corresponding equivalence.

Haagerup's original result \cite{Haa} concerned free groups, equipped with the standard word-length. The extension to hyperbolic groups is due to Jolissaint \cite{Jol} and de la Harpe \cite{dHa}. Note that the spherical inequality (H$_\circ$) implies the ball inequality (H$_\bullet$) and the weighted inequality (H$_s$), by obvious applications of the Cauchy - Schwartz inequality. The weighted inequality (H$_s$) is intimately related to certain Sobolev-type phenomena for the reduced group C$^*$-algebra of $\G$. The Haagerup inequality for hyperbolic groups is an instance of what we now call the \emph{property of Rapid Decay}. See \cite{Sap} for a very recent overview.
  
The starting point of this note is the following natural question: are the above Haagerup inequalities essentially sharp? For the spherical inequality (H$_\circ$), this is known to be the case for free groups. Cohen \cite{Coh} has computed the operator norm of the spherical element 
\begin{align*}
\sigma_n=\sum_{|\gamma|=n} \gamma\in \C\G
\end{align*} when $\G$ is free and endowed with the standard word-length. Cohen's computation immediately implies that $\|\lambda(\sigma_n)\|\asymp (n+1)\|\sigma_n\|_2$. This suggests that (H$_\circ$) ought to be essentially sharp for any hyperbolic $\G$, and it is fairly easy to devise a combinatorial argument - quasifying that of Cohen - showing that this is indeed the case. On the other hand, whether (H$_\bullet$) and (H$_s$) are essentially sharp is a less obvious matter. Relaxing the support condition on $a$ makes the operator norm of $a$ harder to estimate. But the fact that (H$_\bullet$) and (H$_s$) are reasonable by-products of (H$_\circ$), which we just claimed to be essentially sharp, gives a hint of what the answer might be.

The main result of this note is an estimate for the operator norm of radial elements in $\C\G$ with non-negative coefficients.

 \begin{theorem}
Let $a=\sum a_k\sigma_k\in \C\G$, where $a_k\geq 0$. Then $\|\lambda(a)\|\asymp \sum (k+1) \|a_k\sigma_k\|_2$.
\end{theorem}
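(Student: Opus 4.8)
The plan is to prove the two inequalities in $\asymp$ separately. The upper bound is soft; the real content is the lower bound, and there the hypothesis $a_k\ge 0$ is essential — it guarantees that the contributions of the individual spheres add up without cancellation, so that a single positive test object can witness the bound for all of them at once.

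\medskip\noin\emph{Upper bound.} Since $a=\sum_k a_k\sigma_k\in\C\G$ is a finite sum, the triangle inequality together with the spherical Haagerup inequality (H$_\circ$) gives
\[
\|\lambda(a)\|\le\sum_k\|\lambda(a_k\sigma_k)\|\cleq\sum_k(k+1)\|a_k\sigma_k\|_2 ,
\]
so everything reduces to the matching lower bound $\|\lambda(a)\|\succcurlyeq\sum_k(k+1)\|a_k\sigma_k\|_2$.

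\medskip\noin\emph{Lower bound via a boundary representation.} Let $\nu$ be a Patterson--Sullivan measure on $\bd\G$, a quasiconformal density of dimension equal to the exponential growth rate $v$ of $\G$. By Coornaert's theorem the growth of a non-elementary hyperbolic group is purely exponential, $|S_k|\asymp|B_k|\asymp e^{vk}$, so that $\|\sigma_k\|_2=|S_k|^{1/2}\asymp e^{vk/2}$. Let $\pi$ be the associated Koopman representation of $\G$ on $L^2(\bd\G,\nu)$ and let $\mathbf 1\in L^2(\bd\G,\nu)$ be the constant function, a unit vector. I would use two facts. First, $\pi$ is weakly contained in $\lambda$ — a consequence of the amenability of the action of $\G$ on $\bd\G$ — so that $\|\lambda(a)\|\ge\|\pi(a)\|\ge\la\pi(a)\mathbf 1,\mathbf 1\ra$, the last step by Cauchy--Schwarz once we know the right-hand side is $\ge 0$. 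Second, the matrix coefficients against $\mathbf 1$ obey
\begin{equation*}
\la\pi(\gamma)\mathbf 1,\mathbf 1\ra=\int_{\bd\G}\Big(\frac{d\gamma^{-1}_*\nu}{d\nu}\Big)^{1/2}\,d\nu\ \asymp\ (1+|\gamma|)\,e^{-v|\gamma|/2}.
\end{equation*}
In particular $\la\pi(\gamma)\mathbf 1,\mathbf 1\ra>0$, and summing over the sphere of radius $k$ yields $\la\pi(\sigma_k)\mathbf 1,\mathbf 1\ra\asymp|S_k|\,(1+k)\,e^{-vk/2}\asymp(1+k)\,e^{vk/2}\asymp(1+k)\|\sigma_k\|_2$, with all quantities positive. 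Consequently, using $a_k\ge 0$ once more,
\[
\la\pi(a)\mathbf 1,\mathbf 1\ra=\sum_k a_k\,\la\pi(\sigma_k)\mathbf 1,\mathbf 1\ra\ \asymp\ \sum_k(1+k)\,a_k\,\|\sigma_k\|_2=\sum_k(1+k)\|a_k\sigma_k\|_2,
\]
so $\|\lambda(a)\|\ge\la\pi(a)\mathbf 1,\mathbf 1\ra\succcurlyeq\sum_k(k+1)\|a_k\sigma_k\|_2$, which combined with the upper bound finishes the proof.

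\medskip\noin\emph{The main obstacle, and an alternative.} The heart of the matter is the matrix-coefficient estimate above, in particular its lower bound: for $\gamma$ of length $k$ one must produce, for each $0\le s\le k$, a set of $\nu$-measure $\succcurlyeq e^{-vs}$ on which the Radon--Nikodym cocycle $\frac{d\gamma^{-1}_*\nu}{d\nu}$ is $\succcurlyeq e^{v(2s-k)}$; each such shell contributes $\succcurlyeq e^{-vk/2}$ to the integral, and the $1+k$ shells $s=0,\dots,k$ explain the factor $1+k$. This is a quasiconformal/Busemann-function computation resting on the Shadow Lemma; in the tree case (free $\G$) it is exactly the calculation underlying Cohen's formula for $\|\lambda(\sigma_k)\|$, and the general case is obtained by tracking the additive $\delta$-hyperbolicity errors through it. An alternative, boundary-free route would test $\lambda(a)$ directly against the unit vectors $\xi_M\in\ell^2\G$ that put mass $1/M$ uniformly on each of the first $M$ spheres (i.e.\ $\xi_M=M^{-1/2}\sum_{j<M}|S_j|^{-1/2}\mathbf 1_{S_j}$); the point is that $\lambda(\sigma_k)$ carries the normalized sphere of radius $j$ with amplitude $\asymp\|\sigma_k\|_2$ onto \emph{each} of the $\asymp 1+k$ spheres of radii $|k-j|,\dots,k+j$, so that for $M\gg k$ the overlap of these bands produces the factor $1+k$, while $a_k\ge 0$ again prevents cancellation. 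This variant trades the Shadow Lemma for the combinatorial estimate $\#\{(\gamma,\delta):|\gamma|=k,\ |\delta|=j,\ |\gamma\delta|=i\}\asymp e^{v(i+j+k)/2}$ for $|k-j|\le i\le k+j$, whose lower bound — constructing that many near-geodesic concatenations in a general hyperbolic group — is the technical crux on that side.
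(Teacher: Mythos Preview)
Your proposal is correct and follows essentially the same route as the paper: the upper bound via (H$_\circ$), and the lower bound by passing to the boundary (Koopman/quasi-regular) representation, invoking weak containment in $\lambda$ through boundary amenability, and testing against the constant function $\mathbf 1$ using the matrix-coefficient estimate $\la\pi(\gamma)\mathbf 1,\mathbf 1\ra\asymp(1+|\gamma|)\,e^{-v|\gamma|/2}$ together with Coornaert's purely exponential sphere growth. Your sketch of that estimate via shells is exactly the level-set computation the paper carries out (writing $\int q^{(\gamma,\xi)}\,d\xi$ and bounding $\mu\{\xi:(\gamma,\xi)\ge t\}$ above and below by $q^{-t}$ for $0\le t\le|\gamma|-C$); the paper also notes, as you do, that a direct combinatorial argument inside $\G$ ``might be possible, but it appears to be difficult.''
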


\begin{corollary}
The following hold.
\begin{itemize}
\item[i)] For the spherical element $\sigma_n$, we have $\|\lambda(\sigma_n)\|\asymp (n+1)\|\sigma_n\|_2$. In particular, $\mathrm{(H_\circ)}$ is essentially sharp.
\item[ii)] For the ball element $\beta_n=\sum_{k\leq n} \sigma_k$, we have $\|\lambda(\beta_n)\|\asymp (n+1)\|\beta_n\|_2$.
\item[iii)] For the radial element $a=\sum_{k\leq n} a_k\sigma_k$, where $a_k\geq 0$ is such that $\|a_k\sigma_k\|_2=k+1$, we have $\|\lambda(a)\|\asymp (n+1)^{3/2}\|a\|_2$. In particular, $\mathrm{(H_\bullet)}$ is essentially sharp.
\item[iv)] For the radial element $a=\sum_{k\leq n} a_k\sigma_k$, where $a_k\geq 0$ is such that $\|a_k\sigma_k\|_2=(k+1)^{-2}$, we have $\|\lambda(a)\|\asymp \log (n+1)\: \|a\|_{2,3/2}$. In particular, $\mathrm{(H_s)}$ is essentially sharp.
\end{itemize}
\end{corollary}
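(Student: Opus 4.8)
The statement is a direct specialization of the Theorem, so the plan is to substitute each of the four particular $a$'s into $\|\lambda(a)\|\asymp\sum_k(k+1)\|a_k\sigma_k\|_2$ and then match the right-hand side against the relevant $\ell^2$- or weighted-$\ell^2$-norm. I will use three elementary inputs. First, for a non-elementary hyperbolic group the balls and spheres grow purely exponentially (Coornaert), say $|S_k|\asymp|B_k|\asymp\omega^k$ with $\omega>1$, so that $\|\sigma_k\|_2\asymp\|\beta_k\|_2\asymp\omega^{k/2}$. Second, since consecutive terms of $(k+1)\omega^{k/2}$ grow by a factor bounded below by $\omega^{1/2}>1$, the partial sums satisfy $\sum_{k\le n}(k+1)\omega^{k/2}\asymp(n+1)\omega^{n/2}$. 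Third, for a radial $a=\sum_k a_k\sigma_k$, grouping $\sum_\gamma(|\gamma|+1)^{2s}|a(\gamma)|^2$ sphere by sphere gives $\|a\|_{2,s}^2=\sum_k(k+1)^{2s}\|a_k\sigma_k\|_2^2$; and since the $\sigma_k$ have pairwise disjoint support, $\|\sum_k a_k\sigma_k\|_2^2=\sum_k\|a_k\sigma_k\|_2^2$.

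For part (i), with $a=\sigma_n$ only the $k=n$ term survives, so the Theorem gives $\|\lambda(\sigma_n)\|\asymp(n+1)\|\sigma_n\|_2$; as $\sigma_n$ is supported on the sphere of radius $n$, $\mathrm{(H_\circ)}$ reads $\|\lambda(\sigma_n)\|\cleq(n+1)\|\sigma_n\|_2$, so the reverse bound just obtained shows the factor $n+1$ there cannot be lowered. For part (ii), $\beta_n$ has all coefficients $a_k=1$ for $k\le n$, so by the first two inputs the Theorem's right-hand side is $\sum_{k\le n}(k+1)\|\sigma_k\|_2\asymp(n+1)\omega^{n/2}\asymp(n+1)\|\beta_n\|_2$, whence $\|\lambda(\beta_n)\|\asymp(n+1)\|\beta_n\|_2$.

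For part (iii), $\|a_k\sigma_k\|_2=k+1$ gives $\|\lambda(a)\|\asymp\sum_{k\le n}(k+1)^2\asymp(n+1)^3$, while orthogonality gives $\|a\|_2^2=\sum_{k\le n}(k+1)^2\asymp(n+1)^3$, i.e.\ $\|a\|_2\asymp(n+1)^{3/2}$; hence $\|\lambda(a)\|\asymp(n+1)^{3/2}\|a\|_2$, which meets $\mathrm{(H_\bullet)}$ and pins down its exponent $3/2$. For part (iv), $\|a_k\sigma_k\|_2=(k+1)^{-2}$ gives $\|\lambda(a)\|\asymp\sum_{k\le n}(k+1)^{-1}\asymp\log(n+1)$, while the third input gives $\|a\|_{2,3/2}^2=\sum_{k\le n}(k+1)^3(k+1)^{-4}=\sum_{k\le n}(k+1)^{-1}\asymp\log(n+1)$, so $\|a\|_{2,3/2}\asymp(\log(n+1))^{1/2}$; hence the quotient $\|\lambda(a)\|/\|a\|_{2,3/2}$ grows like $(\log(n+1))^{1/2}$ and is unbounded, so $\mathrm{(H_s)}$ genuinely fails at the endpoint $s=3/2$. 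These coefficients are exactly the Cauchy--Schwarz extremiser (that is, $(k+1)^{3/2}\|a_k\sigma_k\|_2\propto(k+1)^{-1/2}$), which is why this radial family is optimal for detecting the failure.

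I do not expect a serious obstacle once the Theorem is in hand; the content all sits in the Theorem, whose upper bound follows from the triangle inequality applied term by term together with $\mathrm{(H_\circ)}$, and whose lower bound comes from evaluating $\langle\lambda(a)\mu,\mu\rangle$ — a sum of non-negative terms, since $a$ and $\mu$ are non-negative — on the test vector $\mu=(L+1)^{-1/2}\sum_{B\le m\le B+L}\sigma_m/\|\sigma_m\|_2$ with $L$ past the support of $a$ and $B$ large, the crux there being the hyperbolic triangle-count $\#\{(u,v):|u|=m,\ |v|=m',\ |uv^{-1}|=k\}\asymp\|\sigma_k\|_2\|\sigma_m\|_2\|\sigma_{m'}\|_2$ for $k,m,m'$ satisfying the triangle inequality with a little slack. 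Within the Corollary the one thing to keep honest is uniformity of the constants in $n$: this is why one needs the precise two-sided estimate $|S_k|\asymp\omega^k$ rather than a merely subexponential growth bound, and why the geometric and harmonic sum asymptotics used in parts (ii)--(iv) must be carried out with $\asymp$ throughout.
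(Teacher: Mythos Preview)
Your derivation of the Corollary from the Theorem is correct and is exactly the paper's approach: the paper simply says the four items follow directly from the Theorem, with part ii) additionally requiring Coornaert's sphere estimate $\#\{\gamma:|\gamma|=k\}\asymp q^k$ (the paper's Lemma~3), which is precisely the input you invoke.

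Two remarks. In part iv) your computation gives $\|\lambda(a)\|/\|a\|_{2,3/2}\asymp (\log(n+1))^{1/2}$, not $\log(n+1)$ as written in the statement; your arithmetic is correct, and the ``essentially sharp'' conclusion for $(\mathrm{H}_s)$ follows either way since the quotient is unbounded. Second, your final paragraph sketches a combinatorial proof of the Theorem via a test vector $\mu$ in $\ell^2\Gamma$ and a triangle-count estimate; the paper explicitly remarks that such a combinatorial argument ``might be possible, but it appears to be difficult,'' and instead proves the lower bound analytically via the boundary representation $\pi$ on $L^2(\partial\Gamma,\mu)$, using weak containment $\pi\prec\lambda$ (from boundary amenability) together with the spherical-function asymptotic $\langle\pi(\gamma)\mathbf{1},\mathbf{1}\rangle\asymp(|\gamma|+1)q^{-|\gamma|/2}$. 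This does not affect your proof of the Corollary, which only consumes the Theorem as a black box.
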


The above statements directly follow from the main theorem, except for part ii) which needs in addition Lemma~\ref{third} below.

Parts iii) and iv) are more than sufficient to settle a problem raised in \cite[Section 4]{Nica}. The degree of rapid decay is $3/2$, not just for free groups as originally asked, but for hyperbolic groups in general.

\section{Proof of the main theorem}
That $\|\lambda(a)\|\cleq \sum (k+1) \|a_k\sigma_k\|_2$ follows from the spherical Haagerup inequality (H$_\circ$). We need to show that $\|\lambda(a)\|\succcurlyeq \sum (k+1) \|a_k\sigma_k\|_2$. A combinatorial argument, within $\G$, might be possible, but it appears to be difficult. We use instead an analytic approach, involving the boundary of $\G$. 

Consider the Cayley graph of $\G$ with respect to the given generating set, and let $\bd \G$ denote its boundary. The Gromov product on $\G$, given by the formula $(x,y)=\tfrac{1}{2}(|x|+|y|-|x^{-1}y|)$, is extended in a somewhat ad-hoc way to the boundary by setting
\begin{align*}
(x,\xi)=\inf_{x_i\to \xi}\: \liminf\: (x,x_i), \qquad (\xi,\xi')=\inf_{x_i\to \xi,  x_i'\to\xi'}\: \liminf\: (x_i,x_i').
\end{align*}
With this convention, the hyperbolic inequality $(x,y)\geq \min\{(x,z), (y,z)\}-\delta$ extends from $\G$ to $\G\cup\bd\G$.

We now recall some facts about the metric - measure structure of $\bd \G$. For small enough $\e>0$ there is a \emph{visual metric} $d_\e$ on the boundary $\bd \G$, satisfying $d_\e(\xi,\xi')\asymp \exp(-\e\: (\xi,\xi'))$. Different choices of the visual parameter $\e$ yield comparable Hausdorff measures. Let $\mu$ be a probability measure in the comparability class of visual Hausdorff measures. By \cite{Coo}, $\mu$ is Ahlfors-regular: the measure of any $r$-ball with respect to the visual metric $d_\e$ satisfies $\mu (r\textrm{-ball})\asymp r^{e(\G)/\e}$ for $0\leq r\leq \mathrm{diam}\:\bd\G$. Here, the \emph{critical exponent} $e(\G)$ with respect to the chosen word-length is the finite, positive number given by $e(\G)=\inf\{s>0: \sum_{\gamma} \exp(-s|\gamma|)<\infty\}$. For more details, see \cite{Coo} as well as the discussions in \cite[Sec.15]{KB}, \cite[Sec.5]{EN}.

The boundary representation of $\G$ with respect to $\mu$ is the unitary representation on $L^2(\bd \G,\mu)$ defined as follows:
 \begin{align*}
 \pi(\gamma)f=\big(d (\gamma_*\mu)/d\mu\big)^{1/2} \;\gamma.f
 \end{align*}

\begin{lem}\label{first}
The boundary representation $\pi$ is weakly contained in the regular representation $\lambda$.
\end{lem}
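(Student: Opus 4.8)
The plan is to deduce the lemma from amenability of the $\G$-action on $\bd\G$, repackaged through Fell's absorption principle. Recall that for any unitary representation $\rho$ the tensor product $\rho\otimes\lambda$ is unitarily equivalent to a multiple of $\lambda$, and a multiple of $\lambda$ is weakly equivalent to $\lambda$. So it suffices to show $\pi\prec\pi\otimes\lambda$, and for that it is enough to produce, for each finite $S\subset\G$ and each $\e>0$, an isometry $V\colon L^2(\bd\G,\mu)\to L^2(\bd\G,\mu)\otimes\ell^2\G$ with $\|(\pi\otimes\lambda)(\gamma)\,V-V\,\pi(\gamma)\|<\e$ for all $\gamma\in S$: such asymptotic intertwiners force the diagonal matrix coefficients of $\pi$ to be limits, uniformly on finite sets, of matrix coefficients of $\pi\otimes\lambda$, hence $\pi\prec\pi\otimes\lambda$.

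The one substantial input is that $\G$ acts amenably on its boundary: there is a sequence of continuous maps $\xi_n\colon\bd\G\to\mathrm{Prob}(\G)\subset\ell^1\G$ with $\sup_{\zeta\in\bd\G}\|\gamma\cdot\xi_n(\zeta)-\xi_n(\gamma\zeta)\|_1\to 0$ for every $\gamma\in\G$, where $\gamma\cdot$ denotes left translation on $\ell^1\G$ (since $\bd\G$ is compact, the usual "uniformly on compacts" just means uniformly on $\bd\G$). I would either cite this, or construct the $\xi_n$ by hand: let $\xi_n(\zeta)$ be a suitably normalized non-negative weight supported on those $g\in\G$ with $|g|\approx n$ lying within bounded distance of a geodesic ray from $1$ to $\zeta$, and extract the almost-invariance from the hyperbolic inequality on $\G\cup\bd\G$ recalled above, together with local finiteness of the Cayley graph. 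Passing to square roots, set $\eta_n(\zeta)=\xi_n(\zeta)^{1/2}\in\ell^2\G$; these are unit vectors, and from $(\sqrt{a}-\sqrt{b})^2\le|a-b|$ for $a,b\ge 0$ one gets $\sup_{\zeta}\|\gamma\cdot\eta_n(\zeta)-\eta_n(\gamma\zeta)\|_2\to 0$ for each $\gamma$.

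Now define $V_n\colon L^2(\bd\G,\mu)\to L^2(\bd\G,\mu;\ell^2\G)\cong L^2(\bd\G,\mu)\otimes\ell^2\G$ by $(V_nf)(\zeta)=f(\zeta)\,\eta_n(\zeta)$; since $\|\eta_n(\zeta)\|_2\equiv 1$, each $V_n$ is an isometry. Writing out $\pi$ with its Radon--Nikodym density $\rho_\gamma=d(\gamma_*\mu)/d\mu$ and changing the variable $\zeta\mapsto\gamma\zeta$ to absorb $\rho_\gamma$, one finds
\[ \|(\pi\otimes\lambda)(\gamma)V_nf-V_n\pi(\gamma)f\|^2=\int_{\bd\G}|f(\zeta)|^2\,\|\gamma\cdot\eta_n(\zeta)-\eta_n(\gamma\zeta)\|_2^2\,d\mu(\zeta)\le\|f\|_2^2\,\sup_{\zeta\in\bd\G}\|\gamma\cdot\eta_n(\zeta)-\eta_n(\gamma\zeta)\|_2^2, \]
which tends to $0$ for each fixed $\gamma$. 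Hence for $n$ large $V_n$ is the required asymptotic intertwiner, and $\pi\prec\pi\otimes\lambda\prec\lambda$.

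The main obstacle is the amenability of the boundary action — equivalently, the hands-on construction of near-invariant probability-valued functions $\xi_n$ on $\bd\G$ with control uniform over the whole compact boundary as the scale goes to infinity; everything downstream (Fell absorption, taking square roots, the change-of-variables bookkeeping) is routine. If one prefers, this obstacle can be bypassed by invoking the nuclearity of $C_0(\bd\G)\rtimes_r\G$ and the fact that every representation of a reduced crossed product is weakly contained in its regular representation, of which $\pi$ arises by restriction to $\G$ — though this is essentially the same argument in disguise.
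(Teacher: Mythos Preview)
Your argument is correct and follows essentially the same route as the paper: the key input is amenability of the $\G$-action on $\bd\G$, from which weak containment $\pi\prec\lambda$ is deduced. The paper simply cites \cite{Ada,Ger,Kai} for boundary amenability and \cite{AD,Nev} for the implication, whereas you have unpacked the latter via Fell absorption and the approximate-invariant-means construction of asymptotic intertwiners --- a faithful expansion of what the cited references contain.
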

\begin{proof}
This follows \cite{AD, Nev} from the fact that the action of $\G$ on $\bd \G$ is amenable \cite{Ada}, \cite{Ger, Kai}.
\end{proof} 
Consequently, 
 \begin{align*}
 \|\lambda(a)\|\geq \|\pi(a)\|\geq \la\pi(a)\mathbf{1},\mathbf{1} \ra.
 \end{align*} 
where $\mathbf{1}$ is the constant function equal to $1$ on $\bd \G$.

The next step concerns the asymptotics of the spherical function $\gamma \mapsto \la\pi(\gamma)\mathbf{1},\mathbf{1} \ra$. In what follows, we put $q:= \exp(e(\Gamma))$.

\begin{lem}\label{second}
$\la\pi(\gamma)\mathbf{1},\mathbf{1} \ra\asymp (|\gamma|+1)\: q^{-|\gamma|/2}$.
\end{lem}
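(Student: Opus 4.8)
The plan is to compute $\la\pi(\gamma)\mathbf{1},\mathbf{1}\ra=\int_{\bd\G}\big(d(\gamma_*\mu)/d\mu\big)^{1/2}\,d\mu$ by first replacing the Radon--Nikodym derivative with an explicit expression in the Gromov product, valid up to multiplicative constants, and then estimating the resulting integral by slicing $\bd\G$ according to the value of $(\gamma,\xi)$.

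The analytic input is that $\mu$, lying in the comparability class of visual measures, behaves like a conformal density of dimension $e(\G)$: for $\mu$-almost every $\xi\in\bd\G$,
\[
\frac{d(\gamma_*\mu)}{d\mu}(\xi)\;\asymp\; q^{-\beta_\xi(\gamma,e)}\;=\;q^{\,2(\gamma,\xi)-|\gamma|},
\]
where $\beta_\xi(\gamma,e)=\lim_{x\to\xi}\big(|\gamma^{-1}x|-|x|\big)$ is the Busemann cocycle and the identity $\beta_\xi(\gamma,e)=|\gamma|-2(\gamma,\xi)$ is the definition of $(\gamma,\xi)$ read backwards. This estimate can be quoted from \cite{Coo}; alternatively it drops out of Ahlfors regularity, since by Lebesgue differentiation $d(\gamma_*\mu)/d\mu(\xi)=\lim_{r\to0}\mu(\gamma^{-1}B(\xi,r))/\mu(B(\xi,r))$, the isometry $\gamma^{-1}$ distorts the visual metric near $\xi$ by a conformal factor $\asymp e^{-\e\,\beta_\xi(\gamma,e)}$ (a short computation from $d_\e\asymp e^{-\e(\cdot,\cdot)}$ and the change-of-basepoint formula for Gromov products), and raising this factor to the Ahlfors exponent $e(\G)/\e$ yields $q^{-\beta_\xi(\gamma,e)}$. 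Taking square roots and integrating gives $\la\pi(\gamma)\mathbf{1},\mathbf{1}\ra\asymp q^{-|\gamma|/2}\int_{\bd\G}q^{(\gamma,\xi)}\,d\mu(\xi)$, so the lemma reduces to the claim $\int_{\bd\G}q^{(\gamma,\xi)}\,d\mu\asymp|\gamma|+1$.

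For this I would stratify $\bd\G$ by the superlevel sets $E_j=\{\xi\in\bd\G:(\gamma,\xi)\ge j\}$, $0\le j\le|\gamma|$. Fixing a geodesic $[e,\gamma]$ and letting $\gamma_j$ be its point at distance $j$ from $e$, one has $(\gamma,\gamma_j)=j$, so the hyperbolic inequality gives $(\gamma_j,\xi)\ge\min\{j,(\gamma,\xi)\}-\delta$ and symmetrically $(\gamma,\xi)\ge\min\{j,(\gamma_j,\xi)\}-\delta$, which sandwiches $E_j$ between two shadows of $\gamma_{j\pm O(1)}$. Each such shadow is a visual ball of radius $\asymp e^{-\e j}$, so Ahlfors regularity gives $\mu(E_j)\asymp(e^{-\e j})^{e(\G)/\e}=q^{-j}$, uniformly in $\gamma$ and (for the lower bound, up to an $O(1)$ shift of $j$) in $0\le j\le|\gamma|$. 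Since $q^{(\gamma,\xi)}\asymp q^{j}$ on $E_j\setminus E_{j+1}$, we obtain $\int_{\bd\G}q^{(\gamma,\xi)}\,d\mu\asymp\sum_{j=0}^{|\gamma|}q^{j}\big(\mu(E_j)-\mu(E_{j+1})\big)$. The upper bound $\cleq|\gamma|+1$ is immediate from $\mu(E_j)\cleq q^{-j}$. For the lower bound one cannot use $\mu(E_j)-\mu(E_{j+1})\asymp q^{-j}$ --- there could be cancellation --- so I would Abel-summate: the sum equals $\mu(E_0)+(1-q^{-1})\sum_{j\ge1}q^{j}\mu(E_j)-q^{|\gamma|}\mu(E_{|\gamma|+1})$, where $1-q^{-1}>0$ as $q>1$, the middle sum is $\asymp\sum_{j\ge1}1\asymp|\gamma|$, and the last term is $O(1)$; thus the sum is $\succcurlyeq|\gamma|+1$.

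The step I expect to be the main nuisance is keeping the implied constants uniform: the shadow estimate $\mu(E_j)\asymp q^{-j}$ must hold with constants independent of $\gamma$ and of $j$ across essentially the whole range $0\le j\le|\gamma|$, including the two ends where the relevant visual balls are all of $\bd\G$ or are nearly a point, and the additive $\delta$-errors in the hyperbolic and Busemann identities must be absorbed into the $\asymp$ without corrupting the count of $\asymp|\gamma|$ comparable terms. (For $|\gamma|$ bounded there is nothing to prove, since $\{\gamma:|\gamma|\le R\}$ is finite and $\la\pi(\gamma)\mathbf1,\mathbf1\ra>0$.) This is routine hyperbolic bookkeeping rather than a genuine obstacle; the rest is a geometric-series computation.
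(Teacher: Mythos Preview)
Your proposal is correct and follows essentially the same approach as the paper: both reduce via quasi-conformality of $\mu$ to showing $\int_{\partial\Gamma} q^{(\gamma,\xi)}\,d\mu \asymp |\gamma|+1$, and both do so by estimating the measure of the superlevel sets $\{\xi:(\gamma,\xi)\geq t\}$ as $\asymp q^{-t}$ using the hyperbolic inequality together with Ahlfors regularity. The only differences are cosmetic --- the paper uses the continuous layer-cake formula where you Abel-summate over integer levels, it picks a single boundary point $\omega$ near $\gamma$ (via closeness to a geodesic ray) rather than the midpoints $\gamma_j$ along $[e,\gamma]$, and it pauses to note that the measurability of $\xi\mapsto(\gamma,\xi)$ needs a one-line fix.
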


\begin{proof} By \cite{Coo}, $\mu$ is quasi-conformal: $d (\gamma_*\mu)/d\mu\: (\xi)\asymp \exp(e(\Gamma)\: (2(\gamma,\xi)-|\gamma|))$ for $\mu$-almost all $\xi\in \bd \G$. The ad-hoc definition of the extended Gromov product offers no guarantee that the map $\xi\mapsto (\gamma,\xi)$ is measurable. But the same procedure which yields the visual metric $d_\e$ on $\bd \G$ also yields a companion metric-like map $d_\e: \G\times \bd \G\to (0,\infty)$ satisfying $d_\e(\gamma,\xi)\asymp \exp(-\e (\gamma,\xi))$, as well as $|d_\e(\gamma,\xi)-d_\e(\gamma,\xi')|\leq d_\e(\xi,\xi')$. Thus, up to replacing it by a comparable continuous map, we may indeed assume that $\xi\mapsto q^{(\gamma,\xi)}$ is measurable. 

Now we have
\begin{align*}
\la\pi(\gamma)\mathbf{1},\mathbf{1} \ra=\int_{\bd \G} \big(d (\gamma_*\mu)/d\mu\big)^{1/2} \asymp q^{-|\gamma|/2} \int_{\bd \G} q^{(\gamma,\xi)} d\xi.
\end{align*}
To estimate the last integral, we apply the following general formula: if $(X,\nu)$ is a probability space and $f$ is a non-negative measurable map, then
\begin{align*}
\int_X \exp(f)=1+\int_0^\infty \nu\{x\in X: f(x)\geq t\} \exp(t) \: dt.
\end{align*}
In our case, $f$ is the map $\xi\mapsto (\log q)\: (\gamma,\xi)$. After a change of variable $t\mapsto (\log q)\: t$, we get
\begin{align*}
\int_{\bd \G} q^{(\gamma,\xi)}\: d\xi=1+(\log q) \int_0^\infty \mu(S(t))\: q^t \: dt
\end{align*}
where $S(t)=\{\xi\in \bd \G: (\gamma,\xi)\geq t\}$. We claim that the following hold:
\begin{itemize}
\item $S(t)$ is empty for $t>|\gamma|$; 
\item $\mu(S(t))\cleq q^{-t}$ for all $t\geq 0$;
\item there is a constant $C>0$, depending only on $\G$ and the choice of generating set, such that $\mu(S(t))\succcurlyeq q^{-t}$ for all $0\leq t\leq |\gamma|-C$.
\end{itemize}
These facts immediately yield that
\begin{align*}
\int_{\bd \G} q^{(\gamma,\xi)} d\xi\asymp |\gamma|+1.
\end{align*}
The first point is simply a rewriting of the fact that $(\gamma,\xi)\leq |\gamma|$ for all $\xi\in \bd\G$. For the second point, we note that $\mathrm{diam}\: S(t)\cleq \exp(-\e t)$, since $(\xi,\xi')\geq \min\{(\gamma,\xi), (\gamma,\xi')\}-\delta\geq t-\delta$ whenever $\xi,\xi'\in S(t)$. By Ahlfors-regularity, $\mu(S(t))\cleq  \exp(-\e t)^{e(\G)/\e}=q^{-t}$. Finally, let us settle the third point. There is a constant $C'>0$ such that each element of $\G$ has distance at most $C'$ to some geodesic ray based at the identity. Up to additive constants which can be absorbed into $C'$, the distance from $\gamma$ to a geodesic ray can be written as $|\gamma|-(\gamma,\omega)$, where $\omega\in \bd\G$ is the boundary point corresponding to the ray. Thus $(\gamma,\omega)\geq |\gamma|-C'$. Now let $t\leq |\gamma|-C'-\delta$. If $(\xi,\omega)\geq t+\delta$ then $(\gamma,\xi)\geq \min\{(\gamma,\omega), (\xi,\omega)\}-\delta\geq \min\{|\gamma|-C', t+\delta\}-\delta=t$. In other words, $S(t)$ contains a ball of radius $\asymp \exp (-\e t)$ around $\omega$. Ahlfors-regularity, once again, yields $\mu(S(t))\succcurlyeq  \exp(-\e t)^{e(\G)/\e}=q^{-t}$.
\end{proof}

The last ingredient is the asymptotical behaviour of spheres in $\G$.

\begin{lem}\label{third}
$\#\{\gamma \in \G: |\gamma|=k\}\asymp q^k$.
\end{lem}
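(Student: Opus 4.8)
This is a theorem of Coornaert \cite{Coo}; we indicate how it fits the framework above, the point being that it is essentially the boundary computation from the proof of Lemma~\ref{second}. Write $S_k=\{\gamma\in\G:|\gamma|=k\}$. For a parameter $R>0$ and $\gamma\in\G$, call $O(\gamma)=\{\xi\in\bd\G:(\gamma,\xi)\geq |\gamma|-R\}$ the \emph{shadow} of $\gamma$ at $R$; up to enlarging $R$ by a bounded amount, this is the set of endpoints of geodesic rays from the identity that pass within $R$ of $\gamma$. The plan is to fix $R$ once and for all so that the family $\{O(\gamma):\gamma\in S_k\}$ simultaneously covers $\bd\G$, has multiplicity bounded independently of $k$, and consists of sets of measure $\asymp q^{-k}$. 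Then, summing the measure estimate, $\sum_{\gamma\in S_k}\mu(O(\gamma))\asymp \#S_k\cdot q^{-k}$, and comparing this against $\mu(\bd\G)=1$ from both sides pins down $\#S_k\asymp q^k$.

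For the covering: a geodesic ray from the identity to a given $\xi\in\bd\G$ runs through a vertex $\gamma\in S_k$ with $(\gamma,\xi)$ within a bounded distance of $|\gamma|$, so $\xi\in O(\gamma)$ once $R$ exceeds that bound; this needs nothing beyond the existence of such rays. For the bounded multiplicity: if $\xi\in O(\gamma)\cap O(\gamma')$ with $\gamma,\gamma'\in S_k$, the hyperbolic inequality gives $(\gamma,\gamma')\geq\min\{(\gamma,\xi),(\gamma',\xi)\}-\delta\geq k-R-\delta$, so $|\gamma^{-1}\gamma'|=2k-2(\gamma,\gamma')\leq 2R+2\delta$ and hence any fixed $\xi$ lies in at most $\#\{g\in\G:|g|\leq 2R+2\delta\}$ of the shadows. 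For the measure: this is the estimate of $\mu(S(t))$ in the proof of Lemma~\ref{second}, applied to $O(\gamma)$, i.e.\ to $t=|\gamma|-R$. Points of $O(\gamma)$ are pairwise at Gromov product $\geq|\gamma|-R-\delta$, so $\mathrm{diam}\,O(\gamma)\cleq \exp(-\e(|\gamma|-R-\delta))\asymp\exp(-\e|\gamma|)$ and Ahlfors-regularity gives $\mu(O(\gamma))\cleq \exp(-\e|\gamma|)^{e(\G)/\e}=q^{-|\gamma|}$; conversely, choosing $\omega\in\bd\G$ with $(\gamma,\omega)\geq|\gamma|-C'$ as in that proof and requiring $R\geq C'+2\delta$, one sees that $O(\gamma)$ contains a $d_\e$-ball of radius $\asymp\exp(-\e|\gamma|)$ about $\omega$, so $\mu(O(\gamma))\succcurlyeq q^{-|\gamma|}$.

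With these in hand, the covering gives $1=\mu(\bd\G)\leq\sum_{\gamma\in S_k}\mu(O(\gamma))\cleq \#S_k\cdot q^{-k}$, hence $\#S_k\succcurlyeq q^k$; and the multiplicity bound $N$ gives $\#S_k\cdot q^{-k}\cleq\sum_{\gamma\in S_k}\mu(O(\gamma))=\int_{\bd\G}\big(\textstyle\sum_{\gamma\in S_k}\mathbf 1_{O(\gamma)}\big)\,d\mu\leq N$, hence $\#S_k\cleq q^k$.

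The only inputs specific to hyperbolic groups are the hyperbolic inequality and the fact---already used for Lemma~\ref{second}---that every element of $\G$ lies within bounded distance of a geodesic ray based at the identity, so there is no real obstacle. The one thing to watch is that a single $R$ must be taken large enough to validate the covering, the multiplicity bound, and the lower measure estimate at once; after that it is bookkeeping. (A cheaper input---submultiplicativity of the ball-counting function $n\mapsto\#\{\gamma\in\G:|\gamma|\leq n\}$---identifies $q$ with the exponential growth rate and gives the lower bound on ball sizes, but not the purely exponential behaviour, nor the passage from balls to spheres, which is where hyperbolicity really enters.)
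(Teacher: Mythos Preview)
Your argument is correct: it is precisely the shadow-lemma proof of Coornaert, and the paper's own proof consists of nothing more than the citation ``This is, essentially, a result from \cite{Coo}.'' You have therefore supplied the content behind that citation, and your reuse of the $\mu(S(t))$ estimates from the proof of Lemma~\ref{second} to control $\mu(O(\gamma))$ is exactly the connection the paper's phrasing hints at but does not spell out.
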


\begin{proof}
This is, essentially, a result from \cite{Coo}.
\end{proof}

We now complete the proof of the theorem. In light of the last two lemmas, we have:
\begin{align*}
\la\pi(\sigma_k)\mathbf{1},\mathbf{1} \ra&=\sum_{|\gamma|=k} \la\pi(\gamma)\mathbf{1},\mathbf{1} \ra\asymp \sum_{|\gamma|=k} (k+1)\: q^{-k/2}\\
&\asymp (k+1)\: q^{k/2}\asymp (k+1)\:\|\sigma_k\|_2
\end{align*} 
Therefore
\begin{align*}
\|\lambda(a)\|\geq \la\pi(a)\mathbf{1},\mathbf{1} \ra = \sum a_k\: \la\pi(\sigma_k)\mathbf{1},\mathbf{1} \ra  \asymp \sum  (k+1)\:a_k\|\sigma_k\|_2
\end{align*} 
as desired.

 \begin{rem} Instead of using the boundary amenability, which yields Lemma 1, we can also argue as follows. By Lemma~\ref{second}, the spherical function $\gamma\mapsto \la\pi(\gamma)\mathbf{1},\mathbf{1} \ra$ is $p$-summable for every $p>2$. Let $\pi'$ be the restriction of $\pi$ to the closed subspace spanned by $\{\pi(\gamma)\mathbf{1}: \gamma\in \G\}$. Now \cite[Thm.1]{CHH} says that $\pi'$ is weakly contained in $\lambda$, and the rest of the argument goes through with $\pi'$ instead of $\pi$.
 \end{rem}

\begin{ack}
I thank Laurent Bartholdi for discussions, and for his interest in this paper.
\end{ack}

\end{document}